\definecolor{webgreen}{rgb}{0,.5,0}
\definecolor{webbrown}{rgb}{.6,0,0}
\font\smallit=cmti10
\font\smalltt=cmtt10
\font\smallrm=cmr9
\newtheorem{theorem}{Theorem}[subsection]
\newtheorem{definition}[theorem]{Definition}
\newtheorem{lemma}[theorem]{Lemma}
\begin{document}

\begin{center}
{\bf ON ASYMPTOTIC FORMULA OF THE PARTITION FUNCTION $p_A(n)$. }
\vskip 20pt
{\bf A. David Christopher}\\
{\smallit Department of Mathematics, The American College, Madurai, India}\\
\href{mailto:davchrame@yahoo.co.in}{davchrame@yahoo.co.in}\\
{\bf M. Davamani Christober}\\
{\smallit Department of Mathematics, The American College, Madurai, India}\\
\href{mailto:jothichristopher@yahoo.com}{jothichristopher@yahoo.com}
\end{center}

\vskip 30pt
\centerline{\smallit Received:, Accepted:, Published:}
\vskip 30pt
\centerline{\bf Abstract}
\noindent
The partition function, $p_A(n)$, is defined to be the number of partitions of $n$ with parts in the set A, where $n$ is a positive integer and $A$ is a set of positive integers. It is well documented that: if A is a finite set with $\gcd(A)=1$ and $|A|=k$, then 
\[p_A(n)\sim \frac{n^{k-1}}{(\prod_{a\in A}a)(k-1)!}.
\]
Number of proofs have been obtained for this estimate. In this article, we give a new proof for the above estimate by making use of the fact that: $p_A(n)$ is a $quasi\  polynomial$ when A is a finite set. Present method of proof is purely combinatorial.

\pagestyle{myheadings}
\markright{\smalltt INTEGERS:\smallrm ELECTRONIC JOURNAL OF COMBINATORIAL NUMBER THEORY}
\thispagestyle{empty}
\baselineskip=15pt

\section{Introduction and motivation.}
 By a partition of a positive integer $n$, we mean a non increasing sequence of positive integers say $\pi=\left(x_1,x_2,\cdots,x_m\right)$ such that $\sum_{i=1}^{m}x_{i}=n$. Each $x_i$ is called a part of the partition $\pi$. Let $A$ be a set of positive integers. The partition function, $p_A(n)$, is defined to be the number of partitions of $n$ with parts in the set A. \par 
The generating function of $p_A(n)$ is 
\begin{equation}\label{eqn1}
\sum_{n=0}^{\infty}p_A(n)x^n=\prod_{a\in A}\frac{1}{1-x^a}
\end{equation}
with $p_A(0)=1$; this generating function is valid in the interval $|x|<1$.\par
We notice that, if $\gcd(A)\neq 1$, then 
\[p_A(n)=\left\{\begin{array}{l}
p_{\frac{A}{\gcd(A)}}\left(\frac{n}{\gcd(A)}\right) \ \ if\ \gcd(A)|n, \\
0\ otherwise
\end{array}\right.
\]
where the set $\frac{A}{\gcd(A)}=\left\{\frac{a}{\gcd(A)}:a\in A\right\}$.\par 
Throughout this paper, we let $A=\left\{a_1,a_2,\cdots, a_k\right\}$ where $a_i'$s are positive integers; unless otherwise stated, we proceed with the understanding that $\gcd(a_1,a_2,\cdots,a_k)=1$.\par 
There is a lot of literature available on the function $p_A(n)$. T. C. Brown et al \cite{brown} found exact formulas for $p_A(n)$  when $|A|=2$ or $3$. 
Exact formula for $p_A(n)$ can be found using partial fraction decomposition of its generating function (see \cite{natb}). Gert Almkvist \cite{Gert} provided an exact formula for $p_A(n)$ without the use of partial fraction decomposition of generating function.  Following estimate of $p_A(n)$ is well known:
\begin{equation}\label{eqn2}
p_A(n)\sim \frac{n^{k-1}}{\left(a_1a_2\cdots a_k\right)(k-1)!}.
\end{equation}
This estimate was proved by several authors. Earlier, a proof was given by E. Netto \cite{Netto} and later G. Polya and G. Szeg\"{o} \cite{polya} gave another proof; they have used partial fraction decomposition of generating function to accomplish the purpose. Paul Erdos \cite{erd} proved the estimate when $A=\left\{1,2,\cdots,k\right\}$. S. Sertoz and A. E. Ozl\"{u}k. \cite{ser} had given a proof based on the following reccurrence relation of $p_A(n)$:
\[1=\sum_{i=n-k+2}^{n}C_{n-i}[p_A(i)-p_A(i-(a_1\cdots a_k))]
\]
for $n>(a_1\cdots a_k)-(a_1+\cdots +a_k)+k-2$, where
\[C_m=\left\{\begin{array}{l}
(-1)^m{{k-2} \choose {m}} \ \ for\ 0\leq m\leq k-2 \\
0\ otherwise
\end{array}\right.
\]An arithmetic proof was obtained by Melvyn B. Nathanson \cite{natj}.\par 
 In this note, we give a new proof for the above mentioned estimate of $p_A(n)$. Present proof is based on the quasi polynomial representation of $p_A(n)$. 
\begin{definition}
An arithmetical function $f$ is said to be a $Quasi\  polynomial$ if, $f(\alpha l+r)$ is a polynomial in $l$ for each $r=0,1,\cdots,\alpha-1$, where $\alpha$ is a positive integer greater than 1. Each polynomial $f(\alpha l+r)$ is called constituent polynomial of $f$ and $\alpha$ is called quasi period of $f$. 
\end{definition}
The fact that, $p_A(n)$ is a quasi polynomial, was proved by  E. T. Bell \cite{bell}, who empolyed partial fraction decomposition of its generating function. E. M. Wright \cite{wright} showed that $p_A(n)$ is a quasi polynomial by extracting the term $(1-x^t)^{-k}$ from the generating function of $p_A(n)$, where $t=lcm(a_1,\cdots,a_k)$. Using similar method, O. J. R\o dseth and J. A. Sellers \cite{stein} also had found the quasi polynomial representation of $p_A(n)$, but in binomial coefficients form.\par  
In the present proof, we derive a finite linear recurrence relation of $p_A(n)$.
Using that recurrence relation, we show that: the function, $p_A(n)$, is a quasi polynomial with quasi period $a_1a_2\cdots a_k$ and each constituent polynomial being of degree $k-1$. Further, we found that, the leading coefficient of each constituent polynomial of $p_A(n)$ is $\frac{(a_1a_2\cdots a_k)^{k-2}}{(k-1)!}$. Accordingly, we get the required estimate.

\section{Proof.}

\subsection{A recurrence relation satisfied by $p_A(n)$ }
Following lemma is crucial to our proof.
\begin{lemma}\label{lemma}Let $n$ be a positive integer. For $\forall a\in A$, we have
\begin{equation}p_A(n)=p_A(n-a)+p_{A\setminus\left\{a\right\}}(n)
\end{equation}
provided $a\leq n$.
\end{lemma}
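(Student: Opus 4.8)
The plan is to prove this combinatorial identity by partitioning the set of partitions of $n$ with parts in $A$ into two disjoint classes according to whether or not the part $a$ actually occurs in the partition. First I would fix $a \in A$ with $a \le n$ and let $\mathcal{P}_A(n)$ denote the set of all partitions of $n$ with parts in $A$, so that $p_A(n) = |\mathcal{P}_A(n)|$. I would then split $\mathcal{P}_A(n) = \mathcal{Q} \sqcup \mathcal{R}$, where $\mathcal{Q}$ consists of those partitions in which $a$ appears as a part at least once, and $\mathcal{R}$ consists of those in which $a$ does not appear, i.e. all parts lie in $A \setminus \{a\}$.

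The second step is to identify each class with a partition-counting quantity already named in the statement. For $\mathcal{R}$, by definition a partition of $n$ with all parts in $A \setminus \{a\}$ is exactly counted by $p_{A \setminus \{a\}}(n)$, so $|\mathcal{R}| = p_{A\setminus\{a\}}(n)$. For $\mathcal{Q}$, I would exhibit a bijection with $\mathcal{P}_A(n-a)$: given a partition of $n$ in which $a$ occurs, delete one occurrence of the part $a$ to obtain a partition of $n-a$ with parts in $A$ (this is where the hypothesis $a \le n$ is needed, to ensure $n - a \ge 0$ and the operation makes sense); conversely, given any partition of $n-a$ with parts in $A$, insert an extra part equal to $a$ (reordering to keep the sequence non-increasing) to recover a partition of $n$ that contains $a$. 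These two maps are mutually inverse, so $|\mathcal{Q}| = p_A(n-a)$.

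Combining the two counts via $|\mathcal{P}_A(n)| = |\mathcal{Q}| + |\mathcal{R}|$ gives $p_A(n) = p_A(n-a) + p_{A\setminus\{a\}}(n)$, as claimed. I do not anticipate a genuine obstacle here; the only point requiring a little care is the bookkeeping at the boundary, namely handling $n = a$ (where $\mathcal{P}_A(n-a) = \mathcal{P}_A(0)$ consists of the empty partition, consistent with the convention $p_A(0)=1$ from the generating function) and making sure the "insert a part equal to $a$" map genuinely lands in partitions that contain $a$ so that the split into $\mathcal{Q}$ and $\mathcal{R}$ is respected. One could alternatively phrase the entire argument at the level of generating functions by multiplying \eqref{eqn1} through by $(1-x^a)$, but the combinatorial bijection is cleaner and matches the stated aim of a purely combinatorial proof.
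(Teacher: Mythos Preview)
Your proposal is correct and follows essentially the same approach as the paper: split the partitions of $n$ with parts in $A$ according to whether the part $a$ occurs, identify the ``no $a$'' class with $p_{A\setminus\{a\}}(n)$, and exhibit the ``delete one copy of $a$'' bijection from the ``at least one $a$'' class to $\mathcal{P}_A(n-a)$. Your write-up is in fact slightly more explicit about the inverse map and the $n=a$ boundary case than the paper's version.
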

\begin{proof}
Let $\pi=\left(x_1,x_2,\cdots,x_m\right)$ be a partition of $n$ with parts in the set A and let $a\in A$.\par 
\it{Case (i)}\normalfont: 
Assume that $x_i=a$ for some $i$. In such case, we can write\\ $\pi=\left(x_1,\cdots,x_{i-1},a,x_{i+1},\cdots,x_m\right)$. We notice that, the mapping
\[\left(x_1,\cdots,x_{i-1},a,x_{i+1},\cdots,x_m\right)\to \left(x_1,\cdots,x_{i-1},x_{i+1},\cdots,x_m\right)
\] 
is a bijection between the following sets:
\begin{itemize}
\item Set of all partitions of $n$ with parts in the set A and having the part $a$.
\item Set of all partitions of $n-a$ with parts in the set A.
\end{itemize}   We see that, cardinality of the latter set is $p_A(n-a)$.\par 
\it{Case(ii)}: \normalfont Assume that $x_i\neq a$ $\forall i=1,2,\cdots,m$. It is not hard to see that, enumeration of such partitions is $p_{A\setminus\left\{a\right\}}(n)$. Thus result follows.
\end{proof}

\subsection{Main part of the proof}
As consequences of lemma \ref{lemma}, we show that:
\begin{enumerate}
\item The function $p_A(n)$ is a quasi polynomial with quasi period $a_1a_2\cdots a_k$ and each constituent polynomial is of degree $k-1$. 
\item Leading coefficient of the constituent polynomial $p_A(a_1a_2\cdots a_kl+r)$ is $\frac{(a_1a_2\cdots a_k)^{k-2}}{(k-1)!}$ for each $r=0,1,\cdots,a_1\cdots a_k-1$.
\end{enumerate} \par
 From the above two statements it follows readily that:
\[\lim_{l\to\infty}\frac{p_A(a_1a_2\cdots a_kl+r)}{(a_1a_2\cdots a_kl+r)^{k-1}}=\frac{1}{\left(a_1a_2\cdots a_k\right)(k-1)!}
\]
for each $r=0,1,\cdots ,a_1a_2\cdots a_k-1$. Whence the targeted estimate follows. 
\par
Now, we prove statement 1 and statement 2 simultaneously by using induction on $k$.
\par 
Suppose that $k=2$. Let $A=\left\{a_1,a_2\right\}$ with $\gcd(a_1,a_2)=1$. Then applying lemma \ref{lemma} $a_1$ times, we get
\begin{equation}\label{eqn5}
p_A(a_1a_2l+r)-p_A(a_1a_2(l-1)+r)=\sum_{i=0}^{a_1-1}p_{\left\{a_1\right\}}(a_1a_2l+r-ia_2)
\end{equation}
for each $r=0,1,\cdots,a_1a_2-1$. Since the congruence equation
\[a_2x\equiv r(mod\ a_1)
\]
has unique solution modulo $a_1$ (see \cite{david} pp. 83-84), the right side of the equation (\ref{eqn5}) equals $1$. Then replacing $l$ by $1,\cdots,l$ in equation (\ref{eqn5}) and adding, we get that
\[p_A(a_1a_2l+r)=l+p_A(r)
\]
for each $r=0,1,2,\cdots,a_1a_2-1$. Thus, the function $p_A(n)$ is a quasi polynomial with each constituent polynomial being of degree 1 and leading coefficient $1=\frac{(a_1a_2)^{2-2}}{(2-1)!}$.\par

Assume that, the result is true when $|A|<k$ for a fixed $k\geq 3$. Now, consider the set of $k$ positive integers say $A=\left\{a_1,a_2,\cdots,a_k\right\}$ with $\gcd(a_1,a_2,\cdots,a_k)=1$.  
Let $s=\gcd(a_1,a_2,\cdots ,a_{k-1})$. Then applying lemma \ref{lemma} $a_1a_2\cdots a_{k-1}$ times again, we get

\begin{equation}\label{eqn4}
p_A(a_1\cdots a_kl+r)-p_A(a_1\cdots a_k(l-1)+r)=\sum_{0\leq i\leq a_1\cdots  a_{k-1}-1; s|(r-ia_k)}p_{\left\{a_1,\cdots,a_{k-1}\right\}}(a_1\cdots a_{k}l+r-ia_k)
\end{equation}
\[= \sum_{0\leq i\leq a_1\cdots  a_{k-1}-1; s|(r-ia_k)}p_{\left\{\frac{a_1}{s},\cdots ,\frac{a_{k-1}}{s}\right\}}\left(\frac{a_1}{s}\cdots \frac{a_{k-1}}{s}(a_ks^{k-2}l+q_i)+r_i\right)
\]
for each $r=0,1,\cdots ,a_1a_2\cdots a_k-1$, where $r_i$ and $q_i$ were determined from the equality: $\frac{r-ia_k}{s}=\frac{a_1\cdots a_{k-1}}{s^{k-1}}q_i+r_i$; uniqueness of $r_i$ and $q_i$ and the bound: $0\leq r_i\leq \frac{a_1\cdots a_{k-1}}{s^{k-1}}-1$ follows as a consequence of Division algorithm.\par 
We notice that, the congruence equation
\begin{equation}\label{eqn3}
a_kx\equiv r(mod\ s)
\end{equation}
has solution if, and only if, $\gcd(a_k,s)|r$ (see \cite{david} pp. 83-84). In such case, eqn(\ref{eqn3}) will have $\gcd(a_k,s)$ number of mutually incongruent solution modulo $s$. Here, $\gcd(a_k,s)=1$; therefore, eqn(\ref{eqn3}) has a unique solution modulo $s$. \par 
Since $\gcd(\frac{a_1}{s},\cdots \frac{a_k}{s})$=1, by induction assumption, it follows that the right side of the equation (\ref{eqn4}) is the sum of $\frac{a_1\cdots a_k}{s}$ polynomials, each of which is of degree $k-2$ and with leading coefficient being $\left(\frac{a_1\cdots a_{k-1}}{s^{k-1}}\right)^{k-3} \frac{a_k^{k-2}s^{(k-2)^2}}{(k-2)!}$. Consequently, the right side sum of equation (\ref{eqn4}) is itself a polynomial of degree $k-2$. This implies that $p_A(a_1\cdots a_kl+r)$ is a polynomial in $l$ of degree $k-1$ for each $r=0,1,\cdots,a_1\cdots a_k-1$.\par 
Now, we calculate the leading coefficient of $p_A(a_1\cdots a_kl+r)$.  If one denotes the leading coefficient of the polynomial $p_A(a_1\cdots a_kl+r)$ by $c_{k-1}$, then by the previous observations it follows that
\[(k-1)c_{k-1}=\frac{(a_1\cdots a_k)^{k-2}}{(k-2)!},
\]
which simplifies to
\[c_{k-1}=\frac{(a_1\cdots a_k)^{k-2}}{(k-1)!}.
\]
The proof is now completed.


\begin{thebibliography}{10}
\bibitem{Gert} Gert Almkvist, {\em Partitions with parts in a finite set and with parts outside a finite set}, Experimental Mathematics, Vol. 11(2002), No. 4, pp 449-456.
\bibitem{bell}
E. T. Bell, {\em Interpolated denumerants and Lambert series}, Amer. J. Math. 65 (1943), pp. 382-386.
\bibitem{brown} T. C. Brown, Wun- Seng Chou, Peter J.-S. Shiue, {\em On the partition function of a finite set }, Australasian Journal of Combinatorics 27(2003), pp. 193-204.
\bibitem{david} David M. Burton, {\em Elementary Number theory}, Allyn and Bacon, Inc, 1980.
\bibitem{erd} P. Erd\"{o}s, {\em On an elementary proof of some asymptotic formulas in the theory of partitions}, Ann. of Math. (2) 43(1942), 437-450.

\bibitem{natb} Melvyn B. Nathanson, {\em Elementary methods in Number theory}, Springer-verlag, New York- Berlin- Heidelberg (2000), pp. 466-467.
\bibitem{natj} Melvyn B. Nathanson, {\em Partition with parts in a finite set}, Proc. Amer. Math. Soc. 128(2000), 1269-1273.
\bibitem{Netto} E. Netto, {\em Le hrbuch der Combinatorik}, Teubner, Leipzig, 1927.
\bibitem{polya} G. Polya and G. Szeg\"{o}, {\em Aufgaben and Lehrs\"{a}tze aus der analysis}, Springer- Verlag, Berlin, 1925. English translation: {\em Problems and Theorems in Analysis}, Springer- verlag, New York, 1972.
\bibitem{stein}
\O{}.J. R\o dseth and J.A. Sellers, {\em Partition with parts in a finite set}, Int. J. Number Theory 02, 455(2006). 
\bibitem{ser} S. Sertoz and A.E. Ozl\"{u}k, {\em On the number of representations of a integer by a linear form}, Istabbul Univ. FenFak. Mat. Derg 50(1991), 67-77.


\bibitem{wright} E. M. Wright, {\em A simple proof of a known result in partitions}, Amer. Math. Monthly 68(1961), 144-145.
\end{thebibliography}
\end{document}